\documentclass[
english,
]{amsart}

\usepackage{nicefrac}
\usepackage{todonotes}
\usepackage[english]{babel}
\usepackage[T1]{fontenc}
\usepackage[utf8]{inputenc} 
\usepackage[autostyle]{csquotes}
\usepackage[shortlabels]{enumitem} 
\usepackage{amsmath} 
\usepackage{amssymb} 
\usepackage{mathrsfs} 
\usepackage{textcomp}
\usepackage{gensymb} 
\usepackage{dsfont} 
\usepackage{soul}
\usepackage{faktor} 
\usepackage{mathtools} 
\usepackage{accents} 
\usepackage{verbatim} 
\usepackage{subfigure}
\usepackage{url} 
\usepackage{dsfont} 
\usepackage{float} 
\usepackage{wrapfig} 
\usepackage[format=plain,font=small,labelfont=small]{caption} 
\usepackage{graphicx} 
\usepackage{makecell} 
\usepackage[hyperfootnotes=false]{hyperref}
\usepackage[hyphenbreaks]{breakurl}
\hypersetup{
	colorlinks,
	citecolor=blue}
\usepackage[style=numeric, maxbibnames=99]{biblatex}
\addbibresource{literature.bib}

\usepackage{amsthm} 
\theoremstyle{plain}
\newtheorem{theorem}{Theorem}[section]
\theoremstyle{plain}
\newtheorem{corollary}[theorem]{Corollary}
\theoremstyle{plain}
\newtheorem{lemma}[theorem]{Lemma}
\theoremstyle{plain}
\newtheorem{proposition}[theorem]{Proposition}
\theoremstyle{definition}

\theoremstyle{definition}
\newtheorem{definition}[theorem]{Definition}
\theoremstyle{definition}

\theoremstyle{remark}
\newtheorem{remark}[theorem]{Remark}
\theoremstyle{remark}


\numberwithin{equation}{section} 


\allowdisplaybreaks


\begin{document}
	\newcommand{\R}{\mathbb{R}}             
\newcommand{\Comp}{\mathbb{C}}          
\newcommand{\Z}{\mathbb{Z}}             
\newcommand{\Q}{\mathbb{Q}}             
\newcommand{\N}{\mathbb{N}}             

\newcommand{\strip}{\mathbb{S}} 
\newcommand{\dd}{\,\mathrm{d}}							
\newcommand{\C}{\mathrm{C}}								
\newcommand{\Lp}[1]{\mathrm{L}^{#1}}					
\newcommand{\Lploc}[1]{\mathrm{L}^{#1}_{\mathrm{loc}}}	
\newcommand{\Wkp}[1]{\mathrm{W}^{{#1}}}					
\newcommand{\Wkploc}[1]{\mathrm{W}_{\mathrm{loc}}^{{#1}}}	
\newcommand{\Hk}[1]{\mathrm{H}^{#1}}					

\newcommand{\supp}{\text{supp}}                                                         
\newcommand{\spacedot}{\,\cdot\,}														
\newcommand{\spacebar}{\,|\,}															
\newcommand{\Aff}[2]{\mathbb{A}^{#1}(#2)}												
\newcommand{\Polyring}[2]{#1[X_1,\ldots,X_{#2}]}										
\newcommand{\limarrow}[1]{\overset{#1}{\longrightarrow}}								
\newcommand{\limu}[1]{\lim_{#1}\,}														
\newcommand{\supu}[1]{\sup_{#1}\,}														
\newcommand{\infu}[1]{\inf_{#1}\,}														
\newcommand{\liminfu}[1]{\liminf_{#1}\,}												
\newcommand{\limsupu}[1]{\limsup_{#1}\,}												
\newcommand{\esssupu}[1]{\mathrm{ess\,sup}_{#1}\,}										
\newcommand{\essinfu}[1]{\mathrm{ess\,inf}_{#1}\,}										
\newcommand{\osc}[1]{\mathrm{osc}_{#1}\,}												
\newcommand{\norm}[1]{\left\Vert #1 \right\Vert}										
\newcommand{\usedo}{uniform strongly elliptic differential operator of second order}	
\newcommand{\D}{\mathrm{D}}																
\newcommand{\DX}[1]{\mathrm{D}(#1)}														
\newcommand{\e}{\mathrm{e}}																
\newcommand{\Tt}{(T(t))_{t\geq0}}														
\newcommand{\LX}{\mathcal{L}(X)}														
\newcommand{\Cz}{$\C_0$-semigroup}														
\newcommand{\Real}{\mathrm{Re}}														
\newcommand{\Imag}{\mathrm{Im}\,}														
\newcommand{\abs}[1]{\left\vert #1 \right\vert}											
\newcommand{\PO}{\partial\Omega}														
\newcommand{\ddt}[2]{\frac{\mathrm{d}#1}{\mathrm{d}#2}}									
\newcommand{\ppt}[2]{\frac{\partial #1}{\partial #2}}									
\newcommand{\rchi}{\boldsymbol{1}}
\newcommand{\absatz}{\phantom{text}\\}													
\newcommand{\sgn}[1]{\mathrm{sgn}(#1)}													
\newcommand{\edot}[1]{\accentset{\circ}{#1}}											
\newcommand{\id}{\text{Id}}																
\newcommand{\BMO}[1]{\mathrm{BMO}(#1)}													
\newcommand{\dist}[2]{\mathrm{dist}(#1,#2)}												
\makeatletter \newcommand*{\rom}[1]{\expandafter\@slowromancap\romannumeral #1@} \makeatother		
\newcommand{\squareref}[1]{[\ref{#1}]}													
\newcommand{\ii}{\text{i}}															

\def\Xint#1{\mathchoice
	{\XXint\displaystyle\textstyle{#1}}%
	{\XXint\textstyle\scriptstyle{#1}}%
	{\XXint\scriptstyle\scriptscriptstyle{#1}}%
	{\XXint\scriptscriptstyle\scriptscriptstyle{#1}}%
	\!\int}
\def\XXint#1#2#3{{\setbox0=\hbox{$#1{#2#3}{\int}$}
		\vcenter{\hbox{$#2#3$}}\kern-.5\wd0}}
\def\ddashint{\Xint=}
\def\dashint{\Xint-}																	
\newcommand{\LBP}[1]{\mathrm{LBP}(#1)}															
    \title[Quasi-Banach complex interpolation and Calderón products]{\textbf{A note on complex interpolation of quasi-Banach function spaces}}
	\author{Moritz Egert and Benjamin W. Kosmala}
	\date{\today}
    \subjclass[2020]{46B70, 46E30}
    \keywords{Complex interpolation, quasi-Banach function space, Calderón product, $A$-convexity, separability, reiteration}
	
	\begin{abstract}
		Kalton and Mitrea characterized complex interpolation spaces of quasi-Banach function spaces as Calderón products if both interpolants are separable. We show that one separability assumption may be omitted and establish a Wolff-reiteration result with one non-separable endpoint space. 
	\end{abstract}
    \maketitle
	\section{Introduction}
    
    \noindent Recent literature uses Kalton and Mitrea's complex interpolation theory for quasi-Banach spaces \parencite{InterpolationScalesKaltonMitrea} to prove estimates for solutions to e.g.\ elliptic boundary value problems in tent spaces \parencite{EgertAuscher, AmentaAuscher}. Typically, these spaces are characterized by an integrability parameter $p\in(0,\infty]$ such that $p \in (0,1)$ yields quasi-Banach spaces, $p \in [1,\infty]$ Banach spaces and separable spaces are obtained if and only if $p\in(0,\infty)$.
    
    Amongst others, the authors of \parencite{AmentaAuscher} establish interpolation identities for the whole range $p \in (0,\infty]$, using so-called Wolff-reiteration \parencite[Thm.~2]{Wolff} as follows: They show the desired interpolation identity for parameters in $(0,\infty)$, use a duality argument on $[1,\infty)$ to cover parameters in $(1,\infty]$ and finally 'glue' $(0,\infty)$ and $(1,\infty]$ together using Wolff-reiteration.
    Regarding this final gluing procedure, \parencite[Thm.~2.12]{AmentaAuscher} contains a subtle gap, because Wolff-reiteration for the complex method is not known for general quasi-Banach spaces.

    The author of \parencite{Huang} derives such identities differently, by characterizing interpolation spaces elegantly as appropriate products and then mainly argues on the product side. 
    In their case, for instance in \parencite[Thm.~4.3]{Huang}, coincidence of products and interpolation spaces is simply taken for granted.

    In the setting of quasi-Banach function spaces, the purpose of this paper is thus to establish the identification of complex interpolation spaces as appropriate products and to derive from this the desired Wolff-type result.
    
    We first expand on Kalton and Mitrea's pioneering work \parencite[Thm.~3.4]{InterpolationScalesKaltonMitrea} by identifying complex interpolation spaces as Calderón products if only one interpolant is non-separable. Prior to our note, this was only known for sequence spaces \parencite[Sect.~7]{KaltonMayborodaMitrea}. Our (standard) notation is explained in Sections \ref{ChapterInterpolationMethod} and \ref{ChapterQBFS}.
    \begin{theorem}[Calderón formula]\label{TheoremTheBigResult}
		Let $X_0,X_1$ be $A$-convex quasi-Banach function spaces over a separable and $\sigma$-finite measure space $\Omega$, one of which is separable.
        Then $X_0+X_1$ is $A$-convex and for every $\theta \in (0,1)$ the spaces $[X_0,X_1]_\theta$ and $X_0^{1-\theta}X_1^\theta$ are separable and agree up to equivalence of quasi-norms.
	\end{theorem}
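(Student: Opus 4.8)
The plan is to take $X_1$ as the separable endpoint — equivalently, since the measure space is separable and $\sigma$-finite, the \emph{order continuous} one, meaning $\norm{f\rchi_{A_n}}_{X_1}\to0$ whenever $A_n\downarrow\emptyset$ — and to prove the two inclusions between $[X_0,X_1]_\theta$ and $X_0^{1-\theta}X_1^\theta$ separately, extracting $A$-convexity of the sum and separability on the way. To see that $X_0+X_1$ is $A$-convex I would use Kalton's theorem that an $A$-convex quasi-Banach function space is lattice $p$-convex for some $p\in(0,1]$: after renorming, a common exponent $p=\min\{p_0,p_1\}$ with convexity constant $1$ serves for both $X_i$, and then a short pointwise estimate — dominating $(\sum_i\abs{y_i}^p)^{1/p}$ by $2^{1/p-1}\big((\sum_i\abs{y_i^0}^p)^{1/p}+(\sum_i\abs{y_i^1}^p)^{1/p}\big)$ for a near-optimal splitting $y_i=y_i^0+y_i^1$ in $X_0+X_1$ — shows $X_0+X_1$ is $p$-convex, hence $A$-convex. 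With this the complex method on $(X_0,X_1)$ behaves well and the inclusion $[X_0,X_1]_\theta\hookrightarrow X_0^{1-\theta}X_1^\theta$ is available: it is established by the part of \parencite[Thm.~3.4]{InterpolationScalesKaltonMitrea} that uses only $A$-convexity of the couple (and no separability), resting on the lattice maximum principle in $X_0+X_1$ applied to a Calderón extension $F$ of $f=F(\theta)$, which yields $\abs f\le C\,M_0^{1-\theta}M_1^{\theta}$ with $\norm{M_j}_{X_j}\lesssim\sup_t\norm{F(j+\ii t)}_{X_j}$.

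Next I would note that $X_0^{1-\theta}X_1^\theta$ is itself order continuous, hence separable. Every $f$ in it admits $\abs f\le\lambda\abs{g_0}^{1-\theta}\abs{g_1}^{\theta}$ with $g_j\in X_j$, so for $A_n\downarrow\emptyset$
\[
\norm{f\rchi_{A_n}}_{X_0^{1-\theta}X_1^\theta}\le\lambda\,\norm{g_0}_{X_0}^{1-\theta}\,\norm{g_1\rchi_{A_n}}_{X_1}^{\theta}\longrightarrow 0
\]
by order continuity of $X_1$; an order-continuous quasi-Banach function space over a separable $\sigma$-finite measure space is separable, since a countable family of simple functions (subordinate to a countable generating sub-$\sigma$-algebra, an exhaustion by finite-measure sets, and a countable dense set of scalars) is dense. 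Once the reverse inclusion below is in place this also gives separability of $[X_0,X_1]_\theta$.

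The core of the argument is the inclusion $X_0^{1-\theta}X_1^\theta\hookrightarrow[X_0,X_1]_\theta$, for which Kalton and Mitrea's dominated-convergence argument in $X_0$ is not available. Fix $f$ in the product. After the standard normalizations I may assume $g_0,g_1\ge0$ are supported on $S:=\{f\neq0\}$, that $\abs f=g_0^{1-\theta}g_1^{\theta}$, and that $\norm{g_0}_{X_0}^{1-\theta}\norm{g_1}_{X_1}^{\theta}\le(1+\varepsilon)\norm{f}_{X_0^{1-\theta}X_1^\theta}$; on $S$ the ratio $g_1/g_0$ is positive and finite a.e., so $E_n:=\{x\in S:\abs{\log(g_1/g_0)(x)}\le n\}$ increases to $S$, and I set $f_n:=f\rchi_{E_n}$. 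For $m\neq n$, say $m<n$, the explicit family
\[
F(z):=\sgn f\cdot(g_1/g_0)^{z}\,g_0\rchi_{E_n\setminus E_m},\qquad z\in\overline{\strip},
\]
equals $\sgn f\cdot(g_0\rchi_{E_n\setminus E_m})^{1-z}(g_1\rchi_{E_n\setminus E_m})^{z}$ and lies in the Calderón space of $(X_0,X_1)$: because $\abs{\log(g_1/g_0)}\le n$ on $E_n\setminus E_m$, the map $z\mapsto(g_1/g_0)^{z}$ is a bounded, $\Lp{\infty}(E_n\setminus E_m)$-valued analytic function, so multiplying by the fixed element $g_0\rchi_{E_n\setminus E_m}\in X_0$ makes $F$ a bounded $X_0$-valued analytic function, with $\abs{F(\ii t)}=g_0\rchi_{E_n\setminus E_m}$, $\abs{F(1+\ii t)}=g_1\rchi_{E_n\setminus E_m}$, and $t\mapsto F(1+\ii t)$ continuous into $X_1$ once more by the $\Lp{\infty}$-continuity of $z\mapsto(g_1/g_0)^{z}$ — never invoking dominated convergence. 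As $F(\theta)=f_n-f_m$, balancing $F$ gives
\[
\norm{f_n-f_m}_{[X_0,X_1]_\theta}\le\norm{g_0\rchi_{E_n\setminus E_m}}_{X_0}^{1-\theta}\norm{g_1\rchi_{E_n\setminus E_m}}_{X_1}^{\theta}\le\norm{g_0}_{X_0}^{1-\theta}\,\norm{g_1\rchi_{S\setminus E_m}}_{X_1}^{\theta}\longrightarrow 0
\]
as $m\to\infty$, again by order continuity of $X_1$. Hence $(f_n)$ is Cauchy in the complete space $[X_0,X_1]_\theta$, its limit coincides with $f$ because $[X_0,X_1]_\theta$ and $X_0^{1-\theta}X_1^\theta$ both embed continuously into $X_0+X_1$, in which $f_n\to f$, and since $\norm{f_n}_{[X_0,X_1]_\theta}\le\norm{g_0}_{X_0}^{1-\theta}\norm{g_1}_{X_1}^{\theta}\le(1+\varepsilon)\norm{f}_{X_0^{1-\theta}X_1^\theta}$ for every $n$, letting $\varepsilon\to0$ yields $\norm{f}_{[X_0,X_1]_\theta}\lesssim\norm{f}_{X_0^{1-\theta}X_1^\theta}$.

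The two inclusions combine to $[X_0,X_1]_\theta=X_0^{1-\theta}X_1^\theta$ with equivalent quasi-norms, and separability has been recorded. I expect the construction in the third paragraph to be the main obstacle: the delicate points are to trade the dominated-convergence argument in $X_0$ — which breaks down when $X_0$ is not order continuous — for the elementary continuity of $z\mapsto(g_1/g_0)^{z}$ into $\Lp{\infty}$, and to arrange that the truncated pieces are not merely norm-bounded but genuinely Cauchy in $[X_0,X_1]_\theta$, which is why the truncation must be taken along the level sets of $\log(g_1/g_0)$, so that the tails are governed by order continuity of the single good endpoint $X_1$. The remaining ingredients — the normalizations bringing an arbitrary representation of $f$ into the required shape, and the continuous embeddings of both spaces into $X_0+X_1$ — are routine in the quasi-Banach function space setting.
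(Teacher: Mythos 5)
Your proof is correct, but it diverges from the paper's argument at the crucial step $X_0^{1-\theta}X_1^\theta\hookrightarrow[X_0,X_1]_\theta$, and the difference is worth recording. The paper truncates $f_0$ and $f_1$ \emph{separately} on $E_n=\{\nicefrac1n\le\abs{f_0},\abs{f_1}\le n\}$, uses its Lemma~\ref{LemmaTruncationAdmissibleFunction} (which needs both truncated functions to have nonzero values in $[M^{-1},M]$) to show each truncation lands in $[X_0,X_1]_\theta$, and then — since the resulting dense set $\mathcal{D}$ is not linear — deploys an Aoki-Rolewicz geometric-series decomposition $f=\sum_k f_k$ with $f_k\in\mathcal{D}$ to pass from ``bounded on a dense set'' to the continuous inclusion. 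You instead truncate only the \emph{ratio}, on $E_n=\{\abs{\log(g_1/g_0)}\le n\}$, and argue admissibility of the extension $\sgn f\cdot(g_1/g_0)^z g_0\rchi_{E_n\setminus E_m}$ directly from the fact that $z\mapsto(g_1/g_0)^z$ is a bounded $\Lp{\infty}$-valued entire function, multiplied by the fixed $X_0$-element $g_0\rchi_{E_n\setminus E_m}$; this sidesteps Lemma~\ref{LemmaTruncationAdmissibleFunction} entirely (your truncated $g_0,g_1$ need not be bounded away from $0$ and $\infty$). More importantly, by applying the construction to \emph{differences} $f_n-f_m$ you obtain that $(f_n)_n$ is already Cauchy in $[X_0,X_1]_\theta$ — with the tail controlled by $\norm{g_1\rchi_{S\setminus E_m}}_{X_1}^\theta\to0$ via order continuity of the single good endpoint — and so the Aoki-Rolewicz series machinery is never needed. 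Both routes hinge on the same two external inputs (Kalton's $A$-convexity $\Leftrightarrow$ $p$-convexity for the sum and the $[X_0,X_1]_\theta\subseteq X_0^{1-\theta}X_1^\theta$ inclusion from Kalton-Mitrea, neither of which uses separability), and both use order continuity of the Calderón product/endpoint in an essential way, but your route is more direct and yields the quantitative estimate with just a single quasi-triangle constant rather than the $C^2(2K)D^2(\rho/(1-\rho))^{1/r}$ bookkeeping of the paper. One cosmetic point: your bound $\norm{f_n-f_m}_{[X_0,X_1]_\theta}\le\norm{g_0\rchi_{E_n\setminus E_m}}_{X_0}^{1-\theta}\norm{g_1\rchi_{E_n\setminus E_m}}_{X_1}^\theta$ should carry a factor $2$ from the interior supremum $\sup_{z}\norm{F(z)}_{X_0+X_1}$ in $\norm{\,\cdot\,}_{\mathcal{F}}$, as in the paper's display \eqref{EquationRoughBoundAnalyticFunction}; this does not affect the Cauchy conclusion.
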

    
    We then apply a reiteration result for Calderón products due to Gomez and Milman \parencite[Thm.~4.13]{MilmanGomez}, which provides the desired Wolff-reiteration for quasi-Banach function spaces. 
    
    \begin{theorem}\label{TheoremReiterationQBFS}
		Let $X_0,X_1,X_2,X_3$ be $A$-convex quasi-Banach function spaces over a separable and $\sigma$-finite measure space $\Omega$, where either $X_0$ or $X_3$ is separable and $X_0\cap X_3$ is dense in $X_1$ and $X_2$.
        Further, let $\theta,\eta,\lambda,\mu \in [0,1]$ be parameters subject to the following conditions:
		\begin{enumerate}[(i)]
			\item $0<\theta < \eta < 1$,
			\item $\theta = \lambda \eta$,
			\item $\eta = (1-\mu)\theta+\mu$.
		\end{enumerate}
		If $X_1 = [X_0,X_2]_\lambda$ and $X_2=[X_1,X_3]_\mu$, then $X_1 = [X_0,X_3]_\theta$ and $X_2=[X_0,X_3]_\eta$ hold up to equivalence of quasi-norms.
	\end{theorem}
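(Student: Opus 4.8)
The plan is to convert both hypotheses into Calderón product identities via Theorem~\ref{TheoremTheBigResult}, apply the Gomez--Milman reiteration theorem for Calderón products \parencite[Thm.~4.13]{MilmanGomez} on the product side, and convert back with Theorem~\ref{TheoremTheBigResult}. The only delicate point is to keep track of separability so that Theorem~\ref{TheoremTheBigResult} is applicable at each step.

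First I would carry out the reduction. Assume $X_0$ is separable; the case of separable $X_3$ is symmetric, run in the opposite order. The couple $(X_0,X_2)$ consists of $A$-convex quasi-Banach function spaces over the separable $\sigma$-finite space $\Omega$, one of which ($X_0$) is separable, and $\lambda=\theta/\eta\in(0,1)$ by (i) and (ii); so Theorem~\ref{TheoremTheBigResult} gives $[X_0,X_2]_\lambda = X_0^{1-\lambda}X_2^\lambda$ up to equivalence of quasi-norms and shows this space is separable. As $X_1=[X_0,X_2]_\lambda$ and separability passes to equivalent quasi-norms, $X_1$ is separable with $X_1 = X_0^{1-\lambda}X_2^\lambda$. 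Now $(X_1,X_3)$ meets the hypotheses of Theorem~\ref{TheoremTheBigResult} because $X_1$ is separable, and $\mu=(\eta-\theta)/(1-\theta)\in(0,1)$ by (i) and (iii); hence $X_2=[X_1,X_3]_\mu = X_1^{1-\mu}X_3^\mu$, again separable. Either way we are reduced to showing: if $X_1 = X_0^{1-\lambda}X_2^\lambda$ and $X_2 = X_1^{1-\mu}X_3^\mu$ up to equivalence of quasi-norms, with $X_0\cap X_3$ dense in $X_1$ and $X_2$, then $X_1 = X_0^{1-\theta}X_3^\theta$ and $X_2 = X_0^{1-\eta}X_3^\eta$.

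This last implication is precisely the reiteration theorem for Calderón products: conditions (i)--(iii) are the numerical relations between $\theta,\eta,\lambda,\mu$ making the two substitutions (of $X_0^{1-\eta}X_3^\eta$ for $X_2$ in the first identity, and of $X_0^{1-\theta}X_3^\theta$ for $X_1$ in the second) consistent, and the density of $X_0\cap X_3$ in $X_1,X_2$ supplies the remaining hypothesis of \parencite[Thm.~4.13]{MilmanGomez}. Having obtained $X_1 = X_0^{1-\theta}X_3^\theta$ and $X_2 = X_0^{1-\eta}X_3^\eta$, one finishes by applying Theorem~\ref{TheoremTheBigResult} once more to the couple $(X_0,X_3)$ --- again $A$-convex quasi-Banach function spaces over $\Omega$ with one separable endpoint, and $\theta,\eta\in(0,1)$ --- which identifies $[X_0,X_3]_\theta = X_0^{1-\theta}X_3^\theta$ and $[X_0,X_3]_\eta = X_0^{1-\eta}X_3^\eta$ up to equivalence of quasi-norms, whence $X_1 = [X_0,X_3]_\theta$ and $X_2 = [X_0,X_3]_\eta$.

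I expect the main difficulty to be bookkeeping rather than ideas. One must check that the separability hypothesis of Theorem~\ref{TheoremTheBigResult} really holds at the second application above; this is exactly where Theorem~\ref{TheoremTheBigResult}'s gain over Kalton--Mitrea (one non-separable endpoint allowed) is indispensable, since with the classical statement the chain would break there. One must also confirm that the standing assumptions cover the hypotheses of \parencite[Thm.~4.13]{MilmanGomez}. If that result is phrased as a stability statement rather than directly in Wolff form, one additionally needs the elementary identities $(U^{1-a}V^a)^{1-b}W^b = U^{(1-a)(1-b)}V^{a(1-b)}W^b$ and the solvability of $Z = Z^{\gamma}M$ with $0\le\gamma<1$ by $Z = M^{1/(1-\gamma)}$ for Calderón products, which follow from the definition and reduce the claim to the arithmetic of the exponents checked above.
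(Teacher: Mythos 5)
Your proposal is correct and follows essentially the same route as the paper's proof: use Theorem \ref{TheoremTheBigResult} to convert the hypotheses into Calderón product identities (noting that separability of $X_0$ propagates to $X_1=[X_0,X_2]_\lambda$, which is exactly where the one-sided separability gain is needed), apply the Gomez--Milman reiteration theorem on the product side, and convert back. Your additional checks that $\lambda,\mu\in(0,1)$ and the explicit tracking of where the density of $X_0\cap X_3$ enters are correct bookkeeping that the paper leaves implicit.
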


    \begin{remark}
        It remains an interesting open problem to prove Theorem \ref{TheoremReiterationQBFS} for more general $A$-convex quasi-Banach  spaces.
    \end{remark}

    We will recall the complex interpolation method for general quasi-Banach spaces in Section \ref{ChapterInterpolationMethod} and then focus on quasi-Banach function spaces from Section \ref{ChapterQBFS} onward, while also establishing the main tools for the proofs of the Calderón formula and the reiteration result given in Sections \ref{ChapterProofMainResult} and \ref{ChapterglimpseAtOtherDirection}.
	
	Concerning notation, all vector spaces are taken over $\Comp$, $\mathds{D}$ is the open unit disc in $\Comp$ and $\strip \coloneqq \{z \in \Comp\,|\, \Real(z) \in (0,1)\}$.
    Quasi-norms on quasi-Banach spaces $X$ will be denoted as $\norm{\spacedot}_X$, measures of measureable sets $E$ will be denoted as $\abs{E}$ and we let $\theta \in (0,1)$.

    \subsection*{Acknowledgment} This article is based on the master's thesis of the second author~\parencite{MasterThesis}, which contains further background and details on the topic. We wish to thank Emiel Lorist for guiding us through the literature on Banach function spaces and Marius Mitrea for a helpful exchange about Wolff-reiteration. 
	
	\section[Complex interpolation of quasi-Banach spaces]{Complex interpolation of quasi-Banach spaces}\label{ChapterInterpolationMethod}
	
	\noindent Let $(X_0,X_1)$ be an interpolation couple, namely $X_0$ and $X_1$ are quasi-Banach spaces that continuously embed in some common Hausdorff topological vector space $Z$.
    Following \parencite{KaltonMayborodaMitrea, InterpolationScalesKaltonMitrea}, a function $F: \strip \to X_0+X_1$ is called \textit{analytic} if for every $z_0 \in \strip$ there exists an $r \in (0,\text{dist}(z_0,\partial \strip))$ such that $F$ has a power series representation $F(z) = \sum_{k=0}^\infty f_k (z-z_0)^k$ with convergence in $X_0+X_1$ for every $z \in B_r(z_0)$.
    The space of \textit{admissible functions}, denoted as $\mathcal{F}$, contains all bounded and analytic functions, which can be continuously extended to $\overline{\strip}$ such that for $j=0,1$ the traces $t \mapsto F(j+\ii t)$ are continuous and bounded functions into $X_j$.
    This space is endowed with the quasi-norm 
    \begin{align*}
        \norm{F}_{\mathcal{F}} \coloneqq \max\left\{ \sup_{t \in \R} \norm{F(\ii t)}_{X_0}, \sup_{t \in \R} \norm{F(1 +\ii t)}_{X_1}, \sup_{z \in \strip} \norm{F(z)}_{X_0+X_1}\right\}.
    \end{align*}
    We define the \textit{complex interpolation space} as $[X_0,X_1]_\theta \coloneqq \{F(\theta)\,|\, F \in \mathcal{F}\}$ and equip it with the quotient quasi-norm $\norm{f}_{[X_0,X_1]_\theta} \coloneqq \inf\{\norm{F}_\mathcal{F}\,|\, F(\theta)=f, F \in \mathcal{F}\}$.

    Both $\mathcal{F}$ and $[X_0,X_1]_\theta$ are again quasi-Banach spaces, which agree with Calderóns method when the $X_j$ are Banach spaces \parencite[§2, §3]{Calderon}. 
    There are other complex interpolation methods for quasi-Banach spaces, such as \parencite[Ch.~3]{CalderonTorchinsky} and \parencite{MilmanGomez}, which come with their own partly undesired properties (the former does not agree with Calderóns method in the Banach setting \parencite[Ch.~1.6.7, Rem.~3]{TriebelFunctionSpacesII} and the latter does in general not produce subspaces of $X_0+X_1$, see \parencite{CwikelMilmanSagher}). They are thus not used in this paper.

    \begin{remark}
        In comparison to \parencite{InterpolationScalesKaltonMitrea}, we made two harmless but noteworthy changes:
	    \begin{enumerate}[(i)]
		    \item We dropped the assumption that $X_0\cap X_1$ is dense in both $X_j$, as it was neither needed in our arguments nor in other works in the field \parencite{Yuan, YangSickelYuan}. In particular, already \parencite{KaltonMayborodaMitrea, InterpolationScalesKaltonMitrea} contain results, for which this density assumption cannot hold. Density is typically used to extend operators $T$, which act on $X_0 \cap X_1$, first to both $X_j$ and then to $[X_0,X_1]_\theta$. However, if $T$ is already defined on both $X_j$, then it can be extended to $X_0+X_1$ simply by setting $T(x_0+x_1) \coloneqq T_0(x_1) + T_1(x_1)$.
		    \item We dropped the requirement of locally uniform convergence of the series expansion of admissible functions on $B_r(z_0)$, because convergence of quasi-Banach valued power series inside an open disc implies absolute convergence inside the same open disc, from which locally uniform convergence easily follows \parencite[Lem.~5.10]{MasterThesis}.
	    \end{enumerate}
    \end{remark}

	\section{Quasi-Banach function spaces}\label{ChapterQBFS}
 
	\noindent From now on, let $\Omega$ be a measure space. We denote with $\Lp{0}(\Omega)$ the space of $\Comp$-valued measureable functions on $\Omega$, where two functions are identified if they agree almost everywhere.
    
    In order to do function spaces right, we introduce them as in \parencite[Ch.~2]{LoristNieraeth}.
	\begin{definition}\label{DefinitionQBFS}
		A quasi-Banach space $X \subseteq \Lp{0}(\Omega)$ is called a \textit{quasi-Banach function space} or \textit{quasi-Banach function space over $\Omega$}, if:
		\begin{enumerate}[(i)]
			\item $X$ contains a \textit{weak order unit}, i.e.\ there exists $f \in X$ such that $f>0$ a.e.,
			\item $X$ satisfies the \textit{lattice property}, i.e.\ if $f \in X$, $g \in \Lp{0}(\Omega)$ with $\abs{g} \leq \abs{f}$ a.e., then $g \in X$ with $\norm{g}_X \leq \norm{f}_X$.\label{DefinitionQBFSii}
		\end{enumerate}
	\end{definition}
	
	Let $(X_0,X_1)$ be a couple of quasi-Banach function spaces. It is then admissible for complex interpolation, because the sum $X_0 + X_1$ is well-defined in $\Lp{0}(\Omega)$ and is again a quasi-Banach function space, into which $X_0$ and $X_1$ continuously embed. Also the complex interpolation space $[X_0,X_1]_\theta$ is again a quasi-Banach function space: As weak order unit we take the minimum of the weak order units in the $X_j$ and if $f \in [X_0,X_1]_\theta$ is obtained as $f=F(\theta)$ for some $F\in\mathcal{F}$, then every $g \in \Lp{0}(\Omega)$ with $\abs{g} \leq \abs{f}$ is obtained as $g = G(\theta)$ for $G\coloneqq \nicefrac{g}{f}F$ satisfying $\|G\|_{\mathcal{F}} \leq \|F\|_{\mathcal{F}}$.
	
	Having pointwise products available will allow us to characterize interpolation spaces via
	\begin{definition}\label{DefinitionCalderonProduct}
		The \textit{Calderón product} $X_0^{1-\theta}X_1^\theta$ of quasi-Banach function spaces $X_0$ and $X_1$ over $\Omega$ is defined as the set of all $f\in\Lp{0}(\Omega)$ such that
		\begin{align*}
			\norm{f}_{X_0^{1-\theta}X_1^\theta} \coloneqq \inf \left\{\norm{f_0}_{X_0}^{1-\theta}\norm{f_1}_{X_1}^\theta\,|\,\abs{f}\leq\abs{f_0}^{1-\theta}\abs{f_1}^\theta\text{ a.e.},\ f_j\in X_j\right\}
		\end{align*}
		is finite.
	\end{definition}
    
    With slight adaptations regarding the quasi-triangle inequality, completeness of Calderón products works as in \parencite[§13.5]{Calderon}, making them quasi-Banach function spaces that appear naturally in the context of interpolation.
	
	In order to show that interpolation spaces and Calderón products agree in the Banach case, Calderón used an argument based on approximation via truncations \parencite[§13.6 (i)]{Calderon}, which will also be our strategy for the quasi-Banach setting.
    This requires that truncations give rise to admissible functions and that some sort of pointwise a.e.\ convergence should be compatible with quasi-norm convergence.
	
	To identify limits of truncations in $X_0^{1-\theta}X_1^\theta$ and $[X_0,X_1]_\theta$ in the common ambient space $X_0+X_1$ later on, we will need
	\begin{proposition}\label{PropositionConvergenceXConvergenceA.E.}
		Let $X$ be a quasi-Banach function space over $\Omega$.
        If $x_n \to x$ in $X$, then $x_n \to x$ locally in measure.
        In particular, convergent sequences in $X$ admit pointwise a.e.\ converging subsequences and, consequently, limits in $X$ are determined by their pointwise a.e.\ limits.
	\end{proposition}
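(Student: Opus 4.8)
The plan is to derive the whole proposition from one core statement: if $f_k\in X$ satisfy $\sum_k\norm{f_k}_X^p<\infty$ for a suitable exponent $p$, then $\sum_k|f_k|<\infty$ almost everywhere. Granting this, a standard subsequence argument finishes everything, so I describe the reduction first. Replacing $x_n$ by $x_n-x$, we may assume $x=0$, so $\norm{x_n}_X\to0$. By the Aoki--Rolewicz theorem we may pass to an equivalent quasi-norm that is $p$-subadditive for some $p\in(0,1]$ and is still monotone with respect to the a.e.\ order (along a near-optimal decomposition $f=\sum_i f_i$ one decomposes $g$, with $|g|\le|f|$, as $g=\sum_i g_i$ with $g_i:=g\,(f_i/f)\rchi_{\{f\neq0\}}$, and $|g_i|\le|f_i|$), so $X$ remains a quasi-Banach function space. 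Now choose a subsequence with $\norm{x_{n_k}}_X^p\le2^{-k}$. The partial sums $g_K:=\sum_{k\le K}|x_{n_k}|$ are Cauchy in $X$, since $\norm{g_M-g_K}_X^p\le\sum_{K<k\le M}\norm{x_{n_k}}_X^p$, hence $g_K\to\widetilde g$ in $X$ for some $\widetilde g\in X$, while pointwise $g_K\uparrow g:=\sum_k|x_{n_k}|\in[0,\infty]$.

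The crux — and the step I expect to be the main obstacle, since the obvious idea of ``identifying $\widetilde g$ with the pointwise sum $g$'' is essentially the proposition itself and risks circularity — is to obtain $g\le\widetilde g$ almost everywhere by a closedness argument alone; then $g<\infty$ a.e.\ because $\widetilde g\in X\subseteq\Lp{0}(\Omega)$ is a.e.\ finite. The key observation is that the positive cone is closed in $X$: if $u_m\in X$ are real with $u_m\ge0$ a.e.\ and $u_m\to u$ in $X$, then $u$ is real with $u\ge0$ a.e. Indeed $|\Imag u|=|\Imag(u-u_m)|\le|u-u_m|$ forces $\Imag u=0$ a.e., and pointwise $u^-:=\max(-u,0)\le|u-u_m|$ (distinguish the cases $u\ge0$ and $u<0$, using $u_m\ge0$), so the lattice property gives $\norm{u^-}_X\le\norm{u-u_m}_X\to0$ and hence $u^-=0$ a.e. Applying this with $u_m:=g_M-g_K\ge0$ and letting $M\to\infty$ (note $\widetilde g$ is real, being the $X$-limit of the real functions $g_M$) yields $\widetilde g-g_K\ge0$ a.e.\ for every $K$, whence $g=\sup_K g_K\le\widetilde g$ a.e. In particular $\sum_k|x_{n_k}(\omega)|<\infty$, and therefore $x_{n_k}(\omega)\to0$, for almost every $\omega$.

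It remains to package the conclusions. The argument just given applies verbatim to any subsequence of $(x_n)$, so every subsequence has a further subsequence converging to $0$ pointwise a.e.; in particular $(x_n)$ itself has a pointwise a.e.\ convergent subsequence. This already gives the last assertion: if $x_n\to x$ in $X$ and $x_n\to y$ pointwise a.e., choosing a subsequence doing both forces $x=y$ a.e. For local convergence in measure, suppose it fails; then there are a set $A$ with $\abs{A}<\infty$, numbers $\varepsilon,\delta>0$, and a subsequence with $\abs{\{|x_{n_j}-x|>\varepsilon\}\cap A}\ge\delta$ for all $j$. That subsequence has a further subsequence converging to $x$ pointwise a.e., and since $\abs{A}<\infty$ the dominated convergence theorem gives $\abs{\{|x_{n_{j_\ell}}-x|>\varepsilon\}\cap A}\to0$, a contradiction. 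Hence $x_n\to x$ locally in measure, which completes the proof.
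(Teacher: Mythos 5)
Your proof is correct. Since the paper does not write out an argument but defers to Calderón's Theorem 13.2 ``with slight modifications'', a comparison of routes is worthwhile. Calderón establishes local convergence in measure first, by contradiction: a failure produces characteristic functions $\rchi_{E_n}$ of subsets of a fixed finite-measure set with $\abs{E_n}\geq\delta$ and $\norm{\rchi_{E_n}}_X\to 0$, which is then refuted by a Borel--Cantelli-type argument on $\limsup_n E_n$; the a.e.-convergent subsequence is afterwards a corollary. You reverse the logical order: you first extract an a.e.-convergent subsequence via a rapidly convergent subsequence and completeness, and then obtain local convergence in measure by the sub-subsequence principle and dominated convergence. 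The two delicate points are handled correctly. First, you do not identify the $X$-limit $\widetilde g$ of the partial sums with the pointwise sum $g$ (which would be circular), but only prove $g\leq\widetilde g$ a.e.\ via closedness of the positive cone, and that closedness argument uses nothing beyond the lattice property and definiteness of the quasi-norm, so there is no hidden circularity. Second, your observation that the Aoki--Rolewicz renorming $\norm{f}'=\inf\{(\sum_i\norm{f_i}_X^p)^{1/p}\,|\,f=\sum_i f_i\}$ inherits the lattice property (by transporting a finite decomposition of $f$ to one of $g$ when $\abs{g}\leq\abs{f}$) is correct and worth recording, although it could be bypassed: the form of Aoki--Rolewicz quoted in Section~4 of the paper gives $\norm{\sum_k x_k}_X^s\leq C^2\sum_k\norm{x_k}_X^s$ for the \emph{original} lattice quasi-norm, which already suffices for your Cauchy estimate. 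A small bonus of your ordering is that the a.e.-convergent subsequence is obtained on all of $\Omega$ without invoking $\sigma$-finiteness, which the deduction ``local convergence in measure $\Rightarrow$ a.e.\ subsequence'' would otherwise require; this matches the proposition, which places no $\sigma$-finiteness assumption on $\Omega$.
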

    
    The proof for the Banach case can be found in \parencite[Thm.~13.2]{Calderon} and only needs slight modifications to also handle the quasi-triangle inequality.
	
	Now, we can provide the classical construction of admissible functions with prescribed boundary data on $\strip$. 
    The result seems to be well known and we only include an elementary proof for completeness, since in the quasi-Banach case we cannot rely on difference quotients \parencite[p.~275]{KaltonAnalyticFunctionNonFrench}.
	\begin{lemma}\label{LemmaTruncationAdmissibleFunction}
		Let $X_0$ and $X_1$ be quasi-Banach function spaces over $\Omega$ and $f_j \in X_j$ such that for some $M>1$ their nonzero absolute values are contained in $[M^{-1},M]$ almost everywhere.
        Then $F: \strip \to X_0+X_1, z \mapsto \abs{f_0}^{1-z}\abs{f_1}^z$ is an admissible function.
	\end{lemma}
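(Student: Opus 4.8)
The plan is to check the three requirements for membership in $\mathcal{F}$ — boundedness, analyticity, and a continuous extension to $\overline{\strip}$ with continuous traces into the $X_j$ — all of which reduce, through the lattice property, to elementary pointwise estimates. Throughout write $x\coloneqq\Real(z)$, adopt the convention $0^w\coloneqq0$ for every $w\in\Comp$ (so that $F(z)$ vanishes, consistently in $z$, on the exceptional set $N\coloneqq\{f_0f_1=0\}$ and no ill-defined power of $0$ ever occurs), and set $u\coloneqq\ln\abs{f_1}-\ln\abs{f_0}$ on $\Omega\setminus N$ and $u\coloneqq0$ on $N$; the hypothesis forces $\abs{u}\le L\coloneqq2\ln M$ almost everywhere. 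The basic estimate is that for $z\in\overline{\strip}$
\begin{align*}
	\abs{F(z)}=\abs{f_0}^{1-x}\abs{f_1}^x\le\abs{f_0}\ \text{ on }\ A\coloneqq\{\abs{f_0}\ge\abs{f_1}\},\qquad \abs{F(z)}\le\abs{f_1}\ \text{ on }\ \Omega\setminus A,
\end{align*}
so that writing $F(z)=F(z)\mathbf{1}_A+F(z)\mathbf{1}_{\Omega\setminus A}$ and using the lattice property shows $F(z)\in X_0+X_1$ with $\norm{F(z)}_{X_0+X_1}\le\norm{f_0}_{X_0}+\norm{f_1}_{X_1}$. The same splitting applied to $\abs{F(\ii t)}\le\abs{f_0}$ and $\abs{F(1+\ii t)}\le\abs{f_1}$ shows the traces take values in $X_0$, resp.\ $X_1$, and that all three suprema in $\norm{\spacedot}_{\mathcal{F}}$ are at most $\norm{f_0}_{X_0}+\norm{f_1}_{X_1}$; hence $F$ will be bounded once it is seen to be continuous.

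For analyticity I would fix $z_0\in\strip$, put $g\coloneqq\abs{f_0}^{1-z_0}\abs{f_1}^{z_0}$, and use the pointwise identity (valid for every $z\in\Comp$, trivially on $N$)
\begin{align*}
	F(z)=g\,\e^{(z-z_0)u}=\sum_{k=0}^{\infty}\frac{(z-z_0)^k}{k!}\,gu^k,
\end{align*}
whose coefficients $f_k\coloneqq gu^k/k!$ lie in $X_0+X_1$ by the same splitting together with $\abs{f_k}\le L^k\abs{g}/k!$ and $\abs{g}\le\max\{\abs{f_0},\abs{f_1}\}$. For $\abs{z-z_0}\le r<\dist{z_0}{\partial\strip}$ the remainder after $N$ terms is pointwise dominated by $\rho_N\abs{g}$, where $\rho_N\coloneqq\sum_{k>N}(rL)^k/k!\to0$; the splitting then upgrades this to $\bigl\Vert F(z)-\sum_{k=0}^{N}f_k(z-z_0)^k\bigr\Vert_{X_0+X_1}\le\rho_N(\norm{f_0}_{X_0}+\norm{f_1}_{X_1})$, so the power series converges in $X_0+X_1$ uniformly on $B_r(z_0)$. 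This is precisely analyticity in the sense of Section~\ref{ChapterInterpolationMethod}, and it is the natural substitute for the difference-quotient argument unavailable here.

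Continuity of $z\mapsto F(z)$ on $\overline{\strip}$ into $X_0+X_1$, and of the traces into $X_0$, resp.\ $X_1$, I would deduce from $\abs{\e^w-1}\le\abs{w}\e^{\abs{w}}$: on $\Omega\setminus N$ one has $F(z)-F(z')=\abs{f_0}^{1-z'}\abs{f_1}^{z'}\bigl(\e^{(z-z')u}-1\bigr)$, hence $\abs{F(z)-F(z')}\le\max\{\abs{f_0},\abs{f_1}\}\,\abs{z-z'}L\,\e^{\abs{z-z'}L}$ a.e.\ (and $0$ on $N$), and the $A$-splitting bounds $\norm{F(z)-F(z')}_{X_0+X_1}$ by $\abs{z-z'}L\,\e^{\abs{z-z'}L}(\norm{f_0}_{X_0}+\norm{f_1}_{X_1})$; specializing to $z=\ii t$, $z'=\ii t'$ reduces matters to $\abs{F(\ii t)-F(\ii t')}\le\abs{f_0}\,\abs{t-t'}L$, whence $\norm{F(\ii t)-F(\ii t')}_{X_0}\le\abs{t-t'}L\,\norm{f_0}_{X_0}$, and symmetrically on $\Real(z)=1$. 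Combined with the first paragraph this yields $F\in\mathcal{F}$.

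I do not expect a genuine obstacle: the only points needing care are the bookkeeping on the exceptional set $N$ (ensuring $F$, $g$, and $u$ are declared zero there) and the systematic passage from pointwise domination to quasi-norm estimates in $X_0+X_1$ through the lattice property and the $A$-splitting. The power-series and exponential estimates are routine and, importantly, never invoke a triangle inequality, so they carry over verbatim to the quasi-Banach setting.
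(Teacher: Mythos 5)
Your proof is correct and follows essentially the same route as the paper's: reduce everything to pointwise a.e.\ estimates, upgrade them to quasi-norm estimates via the lattice property, and obtain analyticity from the exponential power series with the uniform bound $\abs{\log(\abs{f_1}/\abs{f_0})}\le 2\log M$. The only (harmless) variations are that you split along $\{\abs{f_0}\ge\abs{f_1}\}$ and bound by the maximum where the paper invokes Young's inequality, and you prove continuity via $\abs{\e^w-1}\le\abs{w}\e^{\abs{w}}$ where the paper integrates along the segment $\overline{zz'}$; both yield the required estimates.
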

	\begin{proof} The proof comes in four steps.\\
 
        Step 1: $F$ is well-defined.\\

        \noindent Let $z \in \strip$. Since powers of $0$ are not unanimously defined, we need to be careful when defining $F$. We set $\abs{f_0}^{1-z} \coloneqq g_z \circ \abs{f_0}$, where
        \begin{align*}
            g_z : [0,\infty) \to \Comp, \qquad w \mapsto \begin{cases}
                w^{1-z}, &\text{if\ } w \neq 0,\\
                0, &\text{else}.
            \end{cases}
        \end{align*}
        Then $g_z$ is measurable and using a similar definition for $\abs{f_1}^z$, defining $F(z) \coloneqq \abs{f_0}^{1-z}\abs{f_1}^z$ yields a measurable function. By Young's inequality, we have that
		\begin{align*}
			\abs{\abs{f_0}^{1-z}\abs{f_1}^z} = \abs{f_0}^{\Real(1-z)}\abs{f_1}^{\Real(z)} \leq \Real(1-z)\abs{f_0} + \Real(z) \abs{f_1}.
		\end{align*}
		Thus, $F(z) \in X_0+X_1$ by the lattice property of $X_0+X_1$.\\

        Step 2: $F$ is bounded on $\strip$.\\
        
        \noindent The pointwise estimate for $F$ on $\strip$ directly translates into a quasi-norm estimate:
        \begin{align}\label{EquationRoughBoundAnalyticFunction}
            \norm{F(z)}_{X_0+X_1} \leq \norm{\Real(1-z)\abs{f_0} + \Real(z) \abs{f_1}}_{X_0+X_1}
					&\leq \norm{f_0}_{X_0} + \norm{f_1}_{X_1}.
        \end{align}
   
        Step 3: $F$ extends continuously to $\overline{\strip}$ such that the traces are bounded into $X_j$.\\

        \noindent To define an extension, we only need to observe that the definition of $g_z$ can be extended to $z \in \overline{\strip}$ and still yields a measurable function. Abusing notation, we denote this extension of $F$ to $\overline{\strip}$ again as $F$. Then $\abs{F(j + \ii t)} = \abs{f_j}$ and we obtain that $t \mapsto F(j + \ii t)$ is a bounded map into $X_j$.
        
        For continuity on $\overline{\strip}$, let $z,z' \in \overline{\strip}$ and denote with $\overline{zz'}$ the line segment connecting $z$ and $z'$. On $A \coloneqq \supp(f_0)\cap\supp(f_1)$, we obtain pointwise a.e.\ the estimate
		\begin{align*}
			& \vert F(z) - F(z')\vert \\
			=&\abs{\abs{f_0}^{1-z}\abs{f_1}^{z} - \abs{f_0}^{1-z'}\abs{f_1}^{z'}} \\
			=&\abs{\int_{\overline{zz'}} \log(\abs{f_1}/\abs{f_0}) \abs{f_0}^{1-\xi} \abs{f_1}^\xi\dd \xi} \\
			\leq &\abs{z-z'} \log(\abs{f_1}/\abs{f_0}) \sup_{\xi \in \overline{zz'}} \abs{f_0}^{1-\Real(\xi)}\abs{f_1}^{\Real(\xi)}\\
			\leq &\abs{z-z'} \log(M^2) M.
		\end{align*}
        The same estimate holds true on $\Omega\setminus A$. As $\rchi_A \leq M^{-1} \abs{f_j}$, the lattice property of $X_0+X_1$ yields $\rchi_A \in X_0+X_1$ with
        \begin{align*}
            \norm{F(z) - F(z')}_{X_0+X_1} \leq \abs{z-z'} \log(M^2) M \norm{\rchi_A}_{X_0+X_1},
        \end{align*}
        showing continuity of $F$ up to $\overline{\strip}$. The continuity of the traces works similarly by taking $X_j$-norms when $z,z' \in j + \ii \R$.\\

        Step 4: $F$ is analytic.\\
        
        \noindent Let $z_0 \in \strip$. By the series expansion of the exponential, we obtain for all $z \in \Comp$ pointwise a.e.\ on $A$ that
		\begin{align*}
			F(z) &= \abs{f_0}^{1-z}\abs{f_1}^z \\
			&= \abs{f_0} \e^{z_0 \log\left(\abs{\frac{f_1}{f_0}}\right)} \e^{(z-z_0)\log\left(\abs{\frac{f_1}{f_0}}\right)} \\
			&= \sum_{k=0}^\infty \frac{\abs{f_0}}{k!} \e^{z_0 \log\left(\abs{\frac{f_1}{f_0}}\right)}\log^k\left(\abs{\frac{f_1}{f_0}}\right)  (z-z_0)^k \\
            &\eqqcolon \sum_{k=0}^\infty g_k (z-z_0)^k.
		\end{align*}
		We again denote the extension of $g_k$ by 0 to $\Omega$ as $g_k$. This yields measurable functions and $g_k \in X_0+X_1$, as $k! \abs{g_k} \leq \abs{f_0}M^2 [\log(M^2)]^{k}$ a.e.\ and the latter is in $X_0$.
        By Proposition \ref{PropositionConvergenceXConvergenceA.E.}, this is the only candidate for a series expansion in $X_0+X_1$ and we need to check that the series converges in $X_0+X_1$.
        If $z \in \strip$, then
		\begin{align*}
			\norm{\sum_{k=n}^m g_k (z-z_0)^k}_{X_0+X_1}
			\leq &\norm{\sum_{k=n}^m \frac{\abs{f_0}}{k!} M^2 \log^k(M^2)\abs{z-z_0}^k}_{X_0+X_1} \\
			\leq &M \norm{\rchi_{A}}_{X_0+X_1} M^2 \sum_{k=n}^m \frac{\log^k(M^2)}{k!} \abs{z-z_0}^k
		\end{align*}
		shows that the series converges indeed in $X_0+X_1$ for every $z \in B_{\text{dist}(z_0,\partial\strip)}(z_0)$.
	\end{proof}
	To relate functions to their truncations, we use the following concept.
	\begin{definition}\label{DefinitionOrderContinuity}
		A quasi-Banach function space $X$ over $\Omega$ is \textit{order continuous}, if for every nonnegative sequence $(f_n)_n$ in $X$ with $f_n \searrow 0$ a.e.\ it follows that $f_n \to 0$ in $X$.
	\end{definition}
	
	We have access to order continuity if $\Omega$ is `reasonably separable'.
    \begin{definition}[{{\parencite[p.~12]{LoristNieraeth}}}]
        The measure space $\Omega$ is called \textit{separable}, if there exists a sequence of measurable sets $(A_n)_n$ in $\Omega$ such that for every measurable set $E \subseteq \Omega$ with $\abs{E}<\infty$ and every $\varepsilon>0$ there exists an $n \in \N$ such that $\abs{A_n \Delta E}< \varepsilon$.
    \end{definition}
    In particular, if $\Omega$ is a separable metric space, which is endowed with its Borel $\sigma$-algebra and a regular measure, $\Omega$ is separable in the above sense. (Take $(A_n)_n$ as an enumeration of finite unions of balls with rational radius centered around a dense sequence in $\Omega$.)
	
	\begin{theorem}\label{TheoremCharacterizationSeparabilityOrderContinuity}
		Let $X$ be a quasi-Banach space over a separable and $\sigma$-finite measure space $\Omega$. Then $X$ is separable if and only if $X$ is order continuous.
	\end{theorem}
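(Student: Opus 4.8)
\noindent The plan is to prove the two implications separately; both directions will use that $\Omega$ is separable and $\sigma$-finite. Fix a weak order unit $f_0\in X$ and let $(A_n)_n$ be measurable sets witnessing the separability of $\Omega$.

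For \emph{order continuity $\Rightarrow$ separability} I would show that the countable set
\[
	\mathcal D:=\Big\{\sum_{k=1}^{N}(q_k+\ii q_k')\,\rchi_{A_n\cap\{f_0>1/m\}}\ :\ N,n,m\in\N,\ q_k,q_k'\in\Q\Big\}
\]
is dense in $X$. Given $f\in X$, decompose it into the four nonnegative functions $(\Real f)^{\pm},(\Imag f)^{\pm}$; each is dominated by $|f|\in X$, hence lies in $X$ by the lattice property, so it suffices to approximate a fixed $p\in X$ with $p\ge0$. Using $\sigma$-finiteness one builds simple functions $s_j\uparrow p$ a.e.\ with each $s_j$ supported on a set of finite measure; since $0\le p-s_j\searrow0$ a.e.\ and $p-s_j\le p\in X$, order continuity gives $s_j\to p$ in $X$. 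For a level set $E$ of such an $s_j$ one has $\rchi_E\in X$, and since $\rchi_{E\setminus\{f_0>1/m\}}\searrow0$ a.e.\ below $\rchi_E$, a further application of order continuity lets one assume, after one more approximation, that every level set of $s_j$ is contained in a single $\Omega_0:=\{f_0>1/m_0\}$, on which $\rchi_{\Omega_0}\le m_0f_0\in X$. Finally, for such $E\subseteq\Omega_0$, separability of $\Omega$ furnishes indices $n_i$ with $|E\,\Delta\,(A_{n_i}\cap\Omega_0)|<2^{-i}$; then $\sup_{i\ge j}\rchi_{E\,\Delta\,(A_{n_i}\cap\Omega_0)}\searrow0$ a.e.\ below $\rchi_{\Omega_0}\in X$, so order continuity yields $\|\rchi_E-\rchi_{A_{n_j}\cap\Omega_0}\|_X\to0$. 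Carrying out these approximations for the finitely many level sets of $s_j$ and rounding the coefficients into $\Q+\ii\Q$ shows that $s_j$, hence $p$, hence $f$, lies in the closure of $\mathcal D$.

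For \emph{separability $\Rightarrow$ order continuity} I would argue by contraposition. Suppose $X$ is not order continuous; since $\|u_n\|_X$ is decreasing along any sequence $u_n\searrow0$ a.e., we may fix such a sequence with $\|u_n\|_X\ge\varepsilon>0$ for all $n$. The key step is a disjointification. First reduce to a sequence of the form $u_1\rchi_{E_n}$ with $E_n$ decreasing: set $E_n:=\{u_n\ge\lambda u_1\}$, so $u_n\rchi_{\Omega\setminus E_n}\le\lambda u_1$; choosing $\lambda$ with $C\lambda\|u_1\|_X<\varepsilon/2$ (where $C$ is the quasi-triangle constant), the quasi-triangle inequality gives $\|u_n\rchi_{E_n}\|_X\ge\varepsilon/(2C)$, and from $u_n\rchi_{E_n}\le u_1\rchi_{E_n}$ and the lattice property, $\gamma:=\varepsilon/(2C)\le\|u_1\rchi_{E_n}\|_X$ for all $n$, while $u_1\rchi_{E_n}\searrow0$ a.e.\ because $u_n\to0$ a.e. Now $(u_1\rchi_{E_n})_n$ cannot be Cauchy in $X$: a Cauchy sequence would converge in $X$, and by Proposition \ref{PropositionConvergenceXConvergenceA.E.} its limit would coincide with its a.e.\ limit $0$, contradicting $\|u_1\rchi_{E_n}\|_X\ge\gamma$. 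Hence there are $\gamma'>0$ and indices $n_1<m_1<n_2<m_2<\cdots$ with $\|u_1\rchi_{E_{n_j}\setminus E_{m_j}}\|_X=\|u_1\rchi_{E_{n_j}}-u_1\rchi_{E_{m_j}}\|_X\ge\gamma'$; the sets $D_j:=E_{n_j}\setminus E_{m_j}$ are pairwise disjoint, since $D_{j'}\subseteq E_{n_{j'}}\subseteq E_{m_j}$ for $j'>j$ while $D_j\cap E_{m_j}=\emptyset$, and $\sup_j u_1\rchi_{D_j}\le u_1\in X$. Finally, for $S\subseteq\N$ put $x_S:=\sum_{j\in S}u_1\rchi_{D_j}=u_1\rchi_{\bigcup_{j\in S}D_j}$; then $x_S\le u_1\in X$, so $x_S\in X$ by the lattice property, and for $S\ne T$, picking $j_0\in S\,\Delta\,T$ gives $|x_S-x_T|\ge u_1\rchi_{D_{j_0}}$ and hence $\|x_S-x_T\|_X\ge\gamma'$. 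Thus $\{x_S:S\subseteq\N\}$ is an uncountable $\gamma'$-separated subset of $X$, so $X$ is not separable.

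I expect the main obstacle to be the disjointification step: producing an order-bounded disjoint sequence with quasi-norms bounded away from zero out of a failure of order continuity, where the quasi-triangle inequality is too weak to "discard tails" the way one does in the Banach case. The resolution sketched above sidesteps this by using completeness of $X$ together with Proposition \ref{PropositionConvergenceXConvergenceA.E.} to deduce that the reduced sequence $(u_1\rchi_{E_n})_n$ is not Cauchy, and then reading off pairwise disjoint pieces of uniformly positive quasi-norm directly from this. In the converse direction, the subtle point is that order continuity by itself does not place indicators of finite-measure sets in $X$, which is exactly why the weak order unit $f_0$ and $\sigma$-finiteness are needed to localize everything to the sets $\{f_0>1/m\}$ before invoking the separability of $\Omega$.
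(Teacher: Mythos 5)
Your proof is correct and follows essentially the same route the paper sketches (the paper only gives a two-sentence outline with references): the forward implication proceeds as in [LoristNieraeth, Prop.~3.13] by localizing via the weak order unit and $\sigma$-finiteness and then invoking separability of $\Omega$, and the converse goes by contraposition, building a discrete copy of $\mathcal{P}(\N)$ from disjointified pieces of a witness to the failure of order continuity as in [AliprantisBurkinshaw, Cor.~14.5]. (Minor typo: the indices in $\mathcal D$ should read $A_{n_k}\cap\{f_0>1/m_k\}$ so that the summands are indicators of possibly different sets.)
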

	
    Showing that order continuity implies separability is done similarly to the $\Lp{p}$-case by making use of the weak order unit, see \parencite[Prop.~3.13]{LoristNieraeth}.
    The other direction is more involved: Arguing by contraposition boils down to constructing a discrete copy of $\mathcal{P}(\N)$ in $X$ as in \parencite[Cor.~14.5]{AliprantisBurkinshaw}, which can be adapted to the quasi-Banach setting.

    Our generalization of the Calderón formula is based on
    
	\begin{proposition}\label{PropositionOrderContinuityProduct}
		Let $X_0$ and $X_1$ be quasi-Banach function spaces over $\Omega$. If either of them is order continuous, then so is $X_0^{1-\theta}X_1^\theta$.
	\end{proposition}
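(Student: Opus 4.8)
The plan is to verify order continuity directly from Definition~\ref{DefinitionOrderContinuity}. Note first that Definition~\ref{DefinitionCalderonProduct} is unchanged if one swaps the roles of $X_0$ and $X_1$ and simultaneously replaces $\theta$ by $1-\theta$; since $\theta\in(0,1)$ is kept arbitrary throughout, it therefore suffices to treat the case that $X_0$ is order continuous. So let $(f_n)_n$ be a nonnegative sequence in $Y\coloneqq X_0^{1-\theta}X_1^\theta$ with $f_n\searrow 0$ a.e.; we must show $\norm{f_n}_Y\to 0$.

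The key step is a factorization that moves all the decay of $f_n$ into the $X_0$-slot while freezing the $X_1$-slot. Since $\norm{f_1}_Y<\infty$, fix $g_0\in X_0$ and $g_1\in X_1$ with $\abs{f_1}\leq\abs{g_0}^{1-\theta}\abs{g_1}^\theta$ a.e. Because $1-\theta,\theta>0$, the function $\abs{g_0}^{1-\theta}\abs{g_1}^\theta$ vanishes a.e.\ on $\Omega\setminus A$, where $A\coloneqq\{g_0\neq 0\}\cap\{g_1\neq 0\}$, and hence so does $f_n$ for every $n$ (as $0\leq f_n\leq f_1$). On $A$ set $r_n\coloneqq f_n/(\abs{g_0}^{1-\theta}\abs{g_1}^\theta)\in[0,1]$ and $h_n\coloneqq r_n^{1/(1-\theta)}\abs{g_0}$ (as in Step~1 of Lemma~\ref{LemmaTruncationAdmissibleFunction}, powers of $0$ are read as $0$), and extend $h_n$ by $0$ to $\Omega$. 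Then $h_n\leq\abs{g_0}$ a.e., so $h_n\in X_0$ by the lattice property, and $h_n^{1-\theta}\abs{g_1}^\theta=r_n\abs{g_0}^{1-\theta}\abs{g_1}^\theta=f_n$ a.e.\ (both sides being $0$ off $A$). From the definition of the Calderón product we thus get $\norm{f_n}_Y\leq\norm{h_n}_{X_0}^{1-\theta}\norm{g_1}_{X_1}^\theta$ for every $n$.

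Finally, Proposition~\ref{PropositionConvergenceXConvergenceA.E.} is not even needed here: on $A$ the sequence $r_n$ decreases to $0$ a.e.\ because $f_n\searrow 0$ a.e.\ and the denominator is fixed and positive there, so $h_n=r_n^{1/(1-\theta)}\abs{g_0}$ decreases to $0$ a.e.\ on $A$, while $h_n=0$ off $A$; hence $h_n\searrow 0$ a.e.\ on $\Omega$. Order continuity of $X_0$ gives $\norm{h_n}_{X_0}\to 0$, and the displayed bound yields $\norm{f_n}_Y\to 0$, as desired.

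I do not expect a genuine obstacle: the only delicate points are the bookkeeping on the exceptional set $A$ together with the convention on $0$-th powers, and the appearance of the exponent $1/(1-\theta)$, which is harmless precisely because $\theta$ is bounded away from $1$ — this is exactly why the symmetrization reduction at the outset is used so that the case "$X_1$ order continuous" is covered by the very same computation applied with $\theta$ replaced by $1-\theta$.
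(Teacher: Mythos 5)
Your proof is correct, and it is exactly the argument the paper has in mind: the paper does not reproduce a proof but defers to \parencite[Thm.~1.29]{OrderContinuityCalderonProduct}, noting that it ``relies only on pointwise a.e.\ estimates and the lattice property,'' which is precisely what your factorization $f_n=h_n^{1-\theta}\abs{g_1}^\theta$ with $h_n\searrow 0$ does. (One cosmetic remark: your closing comment that the exponent $\nicefrac{1}{1-\theta}$ is harmless ``because $\theta$ is bounded away from $1$'' is beside the point --- $\theta\in(0,1)$ is fixed, so the exponent is simply a finite positive number; the symmetrization is needed only to decide which factor absorbs the decay.)
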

    
    The proof in \parencite[Thm.~1.29]{OrderContinuityCalderonProduct} relies only on pointwise a.e.\ estimates and the lattice property and thus works \emph{verbatim} in the quasi-Banach setting.

    By Theorem \ref{TheoremCharacterizationSeparabilityOrderContinuity}, the space $X_0^{1-\theta}X_1^\theta$ is order continuous (and therefore truncation arguments work) if $X_0^{1-\theta}X_1^\theta$ is separable, which is the case if for instance $X_0$ and $X_1$ are separable, as is easily seen from the definition of $\norm{\,\cdot\,}_{X_0^{1-\theta}X_1^\theta}$. By Proposition~\ref{PropositionOrderContinuityProduct} and Theorem \ref{TheoremCharacterizationSeparabilityOrderContinuity}, we can reduce to a single separability assumption.
    	
	We will need further properties that a quasi-Banach function space may have. The first one is a weak version of the maximum principle.
    \begin{definition}
        A quasi-Banach function space $X$ over $\Omega$ is called \textit{A-convex} or \textit{analytically convex}, if there exists $C\geq 1$ such that for all polynomials $P:\overline{\mathds{D}} \to X$ it follows that $\norm{P(0)}_X \leq C \sup_{\abs{z}=1}\norm{P(z)}_X$.
    \end{definition}

    \begin{remark}
        Tent spaces as in \parencite{AmentaAuscher, Huang, CoifmanMeyerStein} are covered by our definitions of quasi-Banach function spaces and $A$-convexity, see e.g.\ the proof of \parencite[Thm.~4.3]{Huang}. Additionally, general Banach spaces are $A$-convex \parencite[p.~21]{KaltonMayborodaMitrea}.
    \end{remark}
	
	The second one will yield a different perspective on the proof of the Calderón formula later on.
	\begin{definition}\label{DefinitionFatouProperty}
		A quasi-Banach function space $X$ over $\Omega$ has the \textit{weak Fatou property} if there exists $C \geq 1$ such that for all sequences of non-negative functions $(f_n)_n$ in $X$  that satisfy $\liminf_{n\to\infty} \norm{f_n}_X < \infty$, it follows that $\liminf_{n\to\infty}f_n \in X$ with $\norm{\liminf_{n\to\infty}f_n}_X \leq C \liminf_{n \to \infty} \norm{f_n}_X$.
	\end{definition}
	
        Among the different versions of the Fatou property \parencite[Ch.~15, §65]{Zaanen} we choose this one, because it is most reminiscent of Fatou's lemma from integration theory.
	
	\section{Proof of the Calderón formula}\label{ChapterProofMainResult}
	
    \noindent We recall the Aoki-Rolewicz Theorem \parencite[Thm.~1.3]{FSpaceSampler}, which asserts that for every quasi-Banach space $X$ there is an $r\in(0,1]$ and an equivalent quasi-norm, which is $s$-subadditive and continuous for all $s\in(0,r]$.
    In particular, $\norm{\sum_{k=0}^n x_k}_X^s \leq C^2 \sum_{k=0}^n \norm{x_k}_X^s$ holds whenever $x_0,\ldots,x_n \in X$ and $s\in(0,r]$, where $C\geq 1$ is the constant from the quasi-norm equivalence.
    
    For the following proof, we may assume that the equivalent quasi-norms on either of $[X_0,X_1]_\theta$ and $X_0^{1-\theta}X_1^\theta$ are $s$-subadditive for the same value $r \in(0,1]$ and switching to them from the original norm will come at the cost of a constant $C \geq 1$ or $D\geq1$, respectively. As only the equivalent quasi-norm on $X_0^{1-\theta}X_1^\theta$ shows up explicitly, this one will be denoted as $\norm{\,\cdot\,}$. We also denote the constant from the quasi-triangle inequality of $[X_0,X_1]_\theta$ as $C_\square$.
    
	\begin{proof}[Proof of Theorem \ref{TheoremTheBigResult}]
		We use ideas from \parencite[§13.6 (i)]{Calderon} to show that the inclusion $X_0^{1-\theta}X_1^\theta \subseteq [X_0,X_1]_\theta$ holds, that it is bounded and that both spaces are separable.
  
        That $X_0+X_1$ is $A$-convex and that there is a bounded inclusion $[X_0,X_1]_\theta \subseteq X_0^{1-\theta}X_1^\theta$ was proven in \parencite[Thm.~3.4]{InterpolationScalesKaltonMitrea} without the assumption on separability. To convince the reader that these results indeed do not rely on separability, an overview on their proofs can be found in Section~\ref{ChapterglimpseAtOtherDirection}. We set $K \coloneqq 2 D^2$ and work with the following auxiliary set
		\begin{align*}
			\mathcal{D} \coloneqq \Bigl\{f \in X_0^{1-\theta}X_1^\theta\,|\, &\exists\, M>1,\text{ s.t. }\abs{f}=K \norm{f}_{X_0^{1-\theta}X_1^\theta}\abs{f_0}^{1-\theta}\abs{f_1}^\theta,\\
			&f_j \in X_j,\, \norm{f_j}_{X_j} \leq 1,\ M^{-1} \leq \abs{f_j} \leq M\text{ a.e.\ on } \supp(f_j)\Bigr\}.
		\end{align*}
        The actual argument comes in three steps.
            \medskip
		
		Step 1: The bounded inclusion $\mathcal{D} \subseteq [X_0,X_1]_\theta$.\\
		
		\noindent If $f \in \mathcal{D}$, then
		\begin{align*}
			F: \strip \to X_0 + X_1, \qquad z \mapsto \frac{f}{\abs{f}} \abs{f_0}^{1-z}\abs{f_1}^z
		\end{align*}
		is admissible by Lemma \ref{LemmaTruncationAdmissibleFunction}.
        In view of \eqref{EquationRoughBoundAnalyticFunction}, the bound $\norm{F}_\mathcal{F} \leq 2$ is immediate and as $F(\theta) = f(K\norm{f}_{X_0^{1-\theta}X_1^\theta})^{-1}$, we obtain $f \in [X_0,X_1]_\theta$ and 
		\begin{align} \label{equation:Step1_estimate}
			\norm{f}_{[X_0,X_1]_\theta} \leq \norm{F}_\mathcal{F} K\norm{f}_{X_0^{1-\theta}X_1^\theta} \leq  2K\norm{f}_{X_0^{1-\theta}X_1^\theta}.
		\end{align}

            \medskip
		
		Step 2: $\mathcal{D}$ is dense in $X_0^{1-\theta}X_1^\theta$.\\
		
		\noindent By assumption, either $X_0$ or $X_1$ is separable, so that one is order continuous by Theorem \ref{TheoremCharacterizationSeparabilityOrderContinuity}.
        Using Proposition \ref{PropositionOrderContinuityProduct}, we obtain order continuity of $X_0^{1-\theta}X_1^\theta$ and then separability, again by Theorem \ref{TheoremCharacterizationSeparabilityOrderContinuity}.
		
		Let $f \in X_0^{1-\theta}X_1^\theta$. Of course, we can assume $f \neq 0$.
        Then there exist $f_j \in X_j \setminus \{0\}$ such that
		\begin{align*}
			\abs{f}\leq\abs{f_0}^{1-\theta}\abs{f_1}^\theta, \quad \norm{f_0}_{X_0}^{1-\theta}\norm{f_1}_{X_1}^\theta \leq \frac{3}{2}\norm{f}_{X_0^{1-\theta}X_1^\theta}.
		\end{align*}
		Replacing $f_0$ by $\abs{\nicefrac{f}{\abs{f_1}^\theta}}^{\nicefrac{1}{1-\theta}} \leq \abs{f_0}$, we may assume that $\abs{f}=\abs{f_0}^{1-\theta}\abs{f_1}^\theta$.  For $n \in \N$ we introduce $E_n\coloneqq \{ \omega \in \Omega\,|\, \nicefrac{1}{n} \leq \abs{f_0(\omega)},\abs{f_1(\omega)} \leq n\}$ and define truncations via $g_n \coloneqq f \rchi_{E_n}$.
        Then $g_n \in X_0^{1-\theta}X_1^\theta$ and order continuity yields $g_n \to f$ in $X_0^{1-\theta}X_1^\theta$ due to $\abs{f-g_n} \searrow 0$.
		
		In order to show $g_n \in \mathcal{D}$, we observe that $g_n \to f$ in $X_0^{1-\theta}X_1^\theta$ implies $\norm{g_n} \to \norm{f}$ by continuity of $\norm{\,\cdot\,}$ and so we have $\norm{f} \leq \nicefrac{4}{3}\norm{g_n}$ for all large enough $n$. Making $n$ even larger, we can also assume $\norm{f_j \rchi_{E_n}}_{X_j} > 0$ and define $h_{i,n} \coloneqq f_i \rchi_{E_n} \Vert f_i \rchi_{E_n}\Vert^{-1}$ for those $n$. We have $h_{i,n} \in X_i$ and obtain
		\begin{align*}
			\abs{g_n} &= \norm{f_0 \rchi_{E_n}}_{X_0}^{1-\theta} \norm{f_1 \rchi_{E_n}}_{X_1}^{\theta} \abs{h_{0,n}}^\theta \abs{h_{1,n}}^{1-\theta}\notag\\
			&\leq \frac{3}{2} \norm{f}_{X_0^{1-\theta}X_1^\theta} \abs{h_{0,n}}^{1-\theta}\abs{h_{1,n}}^\theta \\
			&\leq \frac{3}{2}\, D \norm{f}_{} \abs{h_{0,n}}^{1-\theta}\abs{h_{1,n}}^\theta \notag\\
			&\leq 2 D \norm{g_n}_{} \abs{h_{0,n}}^{1-\theta}\abs{h_{1,n}}^\theta \notag\\
			&\leq 2 D^2 \norm{g_n}_{X_0^{1-\theta}X_1^\theta} \abs{h_{0,n}}^{1-\theta}\abs{h_{1,n}}^\theta \notag\\
            &= K \norm{g_n}_{X_0^{1-\theta}X_1^\theta} \abs{h_{0,n}}^{1-\theta}\abs{h_{1,n}}^\theta. \notag
		\end{align*}
		Replacing $h_{0,n}$ by $\vert g_n (K \norm{f_n}_{X_0^{1-\theta}X_1^\theta} \abs{h_{1,n}}^\theta )^{-1}\vert^{1/(1-\theta)} \leq \abs{h_{0,n}}$ achieves equality and we see that every $g_n$ belongs to $\mathcal{D}$ by making a certain choice of $M > 1$.\\
		
		Step 3: $X_0^{1-\theta}X_1^\theta \subseteq [X_0,X_1]_\theta$ follows by density. \\
  
        \noindent This is not immediate, as $\mathcal{D}$ might not be a linear subspace. So, let $f \in X_0^{1-\theta}X_1^\theta$.
        Inductively, we shall construct a sequence $(g_n)_n$ in $ \mathcal{D}$ such that
		\begin{align}
			\Bigg\Vert f-\sum_{k=0}^n f_k \Bigg\Vert^r &\leq \rho^{n+1} \norm{f}^r, \label{EquationDensitySeriesEstimate}\\
			\norm{f_n}^r& \leq \rho^{n+1} \norm{f}^r \label{EquationDensitySequenceEstimate}
		\end{align}
		hold for some $\rho \in (0,1)$.
		
		For $n=0$, let $c,\varepsilon>0$ be parameters to be chosen momentarily. 
        By the previous step, there is an $f_0 \in \mathcal{D}$ such that $\norm{cf-f_0}^r \leq \varepsilon \norm{f}^r$.
        This function also satisfies
		\begin{align*}
			\norm{f-f_0}^r \leq \norm{cf - f_0}^r + \norm{(1-c)f}^r \leq (\varepsilon+(1-c)^r) \norm{f}^r
        \end{align*}
		and
        \begin{align*}
			\norm{f_0}^r \leq \norm{f_0-cf}^r + \norm{cf}^r \leq (\varepsilon + c^r) \norm{f}^r.
		\end{align*}
		Fixing $c=\nicefrac{1}{2}$ and $\varepsilon$ so that $\rho\coloneqq \varepsilon + c^r <1$ yields the claim for $n=0$.
        The induction over $n$ is done similarly by replacing $f$ with $f-\sum_{k=0}^nf_k$ and $f_0$ with $f_{n+1}$.
		
		Since $\rho\in(0,1)$, estimate \eqref{EquationDensitySeriesEstimate} implies that $\sum_{k=0}^n f_k \limarrow{n \to \infty} f$ in $X_0^{1-\theta}X_1^\theta$.
        By \eqref{equation:Step1_estimate} and \eqref{EquationDensitySequenceEstimate} we also obtain
		\begin{align*}
			\norm{f_n}^r_{[X_0,X_1]_\theta} &\leq (2K)^r \norm{f_n}^r_{X_0^{1-\theta}X_1^\theta}\\
			&\leq (2K)^r D^r \norm{f_n}^r_{}\\
			&\leq (2K)^r D^r \rho^{n+1} \norm{f}^r_{} \\
			&\leq (2K)^r D^{2r} \rho^{n+1} \norm{f}^r_{X_0^{1-\theta}X_1^\theta},
		\end{align*}
		which we use to show that $(\sum_{k=0}^n f_k)_n$ is a Cauchy sequence in $[X_0,X_1]_\theta$:
		\begin{align*}
			\Bigg\Vert\sum_{k=m}^n f_k\Bigg\Vert^r_{[X_0,X_1]_\theta}
            \leq &C^{2r} \sum_{k=m}^n \norm{f_k}^r_{[X_0,X_1]_\theta} \\
			\leq &C^{2r} (2K)^r D^{2r} \sum_{k=m}^n \rho^{k+1} \norm{f}^r_{X_0^{1-\theta}X_1^\theta}.
		\end{align*}
		Thus, the limit $\widetilde{f}$ in $[X_0,X_1]_\theta$ satisfies
		\begin{align*}
			\Vert\widetilde{f}\Vert_{[X_0,X_1]_\theta} 
            \leq &C_{\square} \Bigg(\Bigg\Vert\widetilde{f}-\sum_{k=0}^n f_k\Bigg\Vert_{[X_0,X_1]_\theta} + \Bigg\Vert\sum_{k=0}^n f_k\Bigg\Vert_{[X_0,X_1]_\theta}\Bigg) \\
			\leq &C_{\square}\Bigg(\Bigg\Vert\widetilde{f}-\sum_{k=0}^n f_k\Bigg\Vert_{[X_0,X_1]_\theta} + C^2 (2K) D^{2} \Bigg(\sum_{k=0}^n\rho^{k+1}\Bigg)^{\nicefrac{1}{r}} \norm{f}_{X_0^{1-\theta}X_1^\theta}\Bigg)
		\end{align*}
		and so by passing to the limit, we obtain
		\begin{align}\label{EquationContinuousInclusionProductInterpolationSpace}
			\Vert\widetilde{f}\Vert_{[X_0,X_1]_\theta} \leq C_{\square} C^2 (2K) D^{2} \left(\frac{\rho}{1-\rho}\right)^{\nicefrac{1}{r}}\norm{f}_{X_0^{1-\theta}X_1^\theta}.
		\end{align}
		We have shown that in the limit as $n\to\infty$, the partial sums above converge to $f$ in $X_0^{1-\theta}X_1^\theta$ and to $\widetilde{f}$ in $[X_0,X_1]_\theta$. Since both spaces continuously embed into $X_0+X_1$ (by definition and Young's inequality, respectively), the limits can be identified by virtue of Proposition~\ref{PropositionConvergenceXConvergenceA.E.}.
        Thus, \eqref{EquationContinuousInclusionProductInterpolationSpace} yields the claim.\qedhere
	\end{proof}
	
        Our second main result follows immediately:
	
	\begin{proof}[Proof of Theorem \ref{TheoremReiterationQBFS}]
		If $X_0$ is separable, then by Theorem \ref{TheoremTheBigResult} also $X_1$ is separable with $X_1 = [X_0,X_2]_\lambda = X_0^{1-\lambda}X_2^\lambda$.
        Since $X_2=[X_1,X_3]_\mu$, separability of $X_1$ yields $X_2=X_1^{1-\mu}X_3^\mu$.
        The same conclusion can be achieved when $X_3$ is separable.
        Now, \parencite[Thm.~4.13]{MilmanGomez} applies and yields $X_1 = X_0^{1-\theta}X_3^\theta$ and $X_2=X_0^{1-\eta}X_3^\eta$. Using Theorem \ref{TheoremTheBigResult} again to convert Calderón products back to interpolation spaces, we finally obtain $X_1 = [X_0,X_3]_\theta$ and $X_2=[X_0,X_3]_\eta$.
	\end{proof}
 
    Our proof of the inclusion $X_0^{1-\theta}X_1^\theta \subseteq [X_0,X_1]_\theta$ uses separability of the Calderón product in a crucial way. We can simplify the argument, whenever more structure of the interpolation space is known a priori.
	\begin{corollary}
		Let $X_0,X_1$ be $A$-convex quasi-Banach function spaces over a $\sigma$-finite measure space $\Omega$ and let $[X_0,X_1]_\theta$ satisfy the weak Fatou property.
        Then $X_0+X_1$ is $A$-convex and the spaces $[X_0,X_1]_\theta$ and $X_0^{1-\theta}X_1^\theta$ agree up to equivalence of quasi-norms.
	\end{corollary}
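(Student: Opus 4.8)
The plan is to run the proof of Theorem~\ref{TheoremTheBigResult} once more, but to use the weak Fatou property of $[X_0,X_1]_\theta$ in place of the separability input (order continuity of $X_0^{1-\theta}X_1^\theta$, density of the auxiliary set $\mathcal D$, and the telescoping--series approximation of Step~3). The two facts that $X_0+X_1$ is $A$-convex and that $[X_0,X_1]_\theta\subseteq X_0^{1-\theta}X_1^\theta$ holds boundedly are imported from \parencite[Thm.~3.4]{InterpolationScalesKaltonMitrea} and need no separability (see Section~\ref{ChapterglimpseAtOtherDirection}), so the only point left to prove is the bounded inclusion $X_0^{1-\theta}X_1^\theta\subseteq[X_0,X_1]_\theta$.

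First I would fix $f\in X_0^{1-\theta}X_1^\theta\setminus\{0\}$ and, exactly as in Step~2 of the proof of Theorem~\ref{TheoremTheBigResult}, pick $f_j\in X_j$ with $\abs f=\abs{f_0}^{1-\theta}\abs{f_1}^\theta$ a.e.\ and $\norm{f_0}_{X_0}^{1-\theta}\norm{f_1}_{X_1}^\theta\le\tfrac32\norm f_{X_0^{1-\theta}X_1^\theta}$. Set $E_n\coloneqq\{\omega\in\Omega : \nicefrac{1}{n}\le\abs{f_0(\omega)},\abs{f_1(\omega)}\le n\}$ and $g_n\coloneqq f\rchi_{E_n}$. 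The key point is the \emph{uniform} bound $\norm{g_n}_{[X_0,X_1]_\theta}\le 3\norm f_{X_0^{1-\theta}X_1^\theta}$ for all $n$. To see it, write $h_{j,n}\coloneqq f_j\rchi_{E_n}\,\norm{f_j\rchi_{E_n}}_{X_j}^{-1}$ for the (cofinitely many) $n$ with $\norm{f_j\rchi_{E_n}}_{X_j}>0$; for the remaining $n$ one has $g_n=0$. Then $\abs{g_n}=\norm{f_0\rchi_{E_n}}_{X_0}^{1-\theta}\norm{f_1\rchi_{E_n}}_{X_1}^\theta\,\abs{h_{0,n}}^{1-\theta}\abs{h_{1,n}}^\theta\le\tfrac32\norm f_{X_0^{1-\theta}X_1^\theta}\,\abs{h_{0,n}}^{1-\theta}\abs{h_{1,n}}^\theta$ by the lattice property, and after lowering $h_{0,n}$ pointwise to turn this into an equality (as in Step~2) the nonzero values of $h_{0,n}$ and $h_{1,n}$ lie in $[M^{-1},M]$ a.e.\ for some $M=M(n)>1$. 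Hence Lemma~\ref{LemmaTruncationAdmissibleFunction} applies and, as in Step~1 of the proof of Theorem~\ref{TheoremTheBigResult}, the map $F_n\colon\strip\to X_0+X_1$, $z\mapsto\tfrac{g_n}{\abs{g_n}}\abs{h_{0,n}}^{1-z}\abs{h_{1,n}}^z$ is admissible with $F_n(\theta)=g_n\bigl(\tfrac32\norm f_{X_0^{1-\theta}X_1^\theta}\bigr)^{-1}$ and $\norm{F_n}_{\mathcal F}\le 2$ by \eqref{EquationRoughBoundAnalyticFunction}; this yields the uniform bound. Note that no order continuity is used here.

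Finally I would pass to the limit. Since $f_0$ and $f_1$ are finite a.e.\ and $\abs f$ vanishes wherever $\abs{f_0}$ or $\abs{f_1}$ does, the sets $E_n$ increase to a set containing $\supp(f)$ up to a null set, so $\abs{g_n}=\abs f\rchi_{E_n}\nearrow\abs f$ a.e. As $\liminf_{n\to\infty}\norm{\abs{g_n}}_{[X_0,X_1]_\theta}=\liminf_{n\to\infty}\norm{g_n}_{[X_0,X_1]_\theta}\le 3\norm f_{X_0^{1-\theta}X_1^\theta}<\infty$, the weak Fatou property (Definition~\ref{DefinitionFatouProperty}) of $[X_0,X_1]_\theta$ gives $\abs f=\liminf_{n\to\infty}\abs{g_n}\in[X_0,X_1]_\theta$ with $\norm{\abs f}_{[X_0,X_1]_\theta}\le 3C\norm f_{X_0^{1-\theta}X_1^\theta}$, and then $f\in[X_0,X_1]_\theta$ with the same bound by the lattice property of the quasi-Banach function space $[X_0,X_1]_\theta$. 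Together with the imported inclusion $[X_0,X_1]_\theta\subseteq X_0^{1-\theta}X_1^\theta$ and the $A$-convexity of $X_0+X_1$, this proves the corollary. The only real work is the uniform estimate on the truncations $g_n$: it is what lets the weak Fatou property replace, in a single step, everything separability bought in Theorem~\ref{TheoremTheBigResult} — order continuity, density of the non-linear set $\mathcal D$, and the iterated approximation of Step~3.
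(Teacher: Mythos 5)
Your proposal is correct and follows the same strategy as the paper's proof: establish a uniform $[X_0,X_1]_\theta$-bound on the truncations $g_n$ via Lemma~\ref{LemmaTruncationAdmissibleFunction} with normalized data $h_{j,n}$, then invoke the weak Fatou property on $\abs{g_n}\nearrow\abs f$ and pass back to $f$ by the lattice property. The only cosmetic differences are the choice of constant ($\tfrac32$ versus $2$ in the near-optimal decomposition) and that you carry the phase factor $g_n/\abs{g_n}$ through the argument rather than reducing to absolute values at the outset as the paper does.
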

    \begin{proof}
        We took inspiration from the proof of the corollary in \parencite[p.~242]{KreinPetuninSemenov}. In our proof of $X_0^{1-\theta}X_1^\theta \subseteq [X_0,X_1]_\theta$ above, separability is only used to prove that truncations also converge in $X_0^{1-\theta}X_1^\theta$. If $[X_0,X_1]_\theta$ satisfies the Fatou property, we can manage this inclusion differently.
	
	    As a preliminary result, we note that $f\in\Lp{0}(\Omega)$ belongs to some quasi-Banach function space $X$ if and only if $\abs{f} \in X$ and in this case we have $\norm{f}_X = \norm{\,\abs{f}\,}_X$. This follows by the lattice property, since $\abs{f}\leq \abs{\,\abs{f}\,}$ and $\abs{\,\abs{f}\,}\leq \abs{f}$. Thus, to show that $X_0^{1-\theta}X_1^\theta \subseteq [X_0,X_1]_\theta$ and that the corresponding quasi-norm estimates hold, it suffices to consider absolute values.
     
        Let $f \in X_0^{1-\theta}X_1^\theta$. Again, we may assume $f\neq0$. Then there exist $f_j \in X_j\setminus\{0\}$ such that
	    \begin{align*}
		      \abs{f}\leq \abs{f_0}^{1-\theta}\abs{f_1}^\theta, \quad \norm{f_0}_{X_0}^{1-\theta}\norm{f_1}_{X_1}^\theta \leq 2 \norm{f}_{X_0^{1-\theta}X_1^\theta}
	    \end{align*}
        and as in Step 2 of the proof of Theorem \ref{TheoremTheBigResult} we may assume that $\abs{f} = \abs{f_0}^{1-\theta}\abs{f_1}^\theta$.
	    For $n \in \N$ set $E_n\coloneqq \{ \omega \in \Omega\,|\, \nicefrac{1}{n} \leq \abs{f_0(\omega)},\abs{f_1(\omega)} \leq n\}$ and define $g_n \coloneqq f \rchi_{E_n}$. The maps
	    \begin{align*}
		      F_n: \strip \to X_0+X_1,\qquad z \mapsto \abs{\frac{f_0\rchi_{E_n}}{\norm{f_0\rchi_{E_n}}_{X_0}}}^{1-z}\abs{\frac{f_1\rchi_{E_n}}{\norm{f_1\rchi_{E_n}}_{X_1}}}^{z}
	    \end{align*}
	    are well-defined for large enough $n$ and admissible by Lemma \ref{LemmaTruncationAdmissibleFunction} and so $\abs{g_n} \in [X_0,X_1]_\theta$ with
	    \begin{align*}
		      \norm{\,\abs{g_n}\,}_{[X_0,X_1]_\theta} &\leq \norm{F_n}_\mathcal{F}  \norm{f_0\rchi_{E_n}}_{X_0}^{1-\theta} \norm{f_1\rchi_{E_n}}_{X_1}^\theta \\
		      &\leq 2 \norm{f_0}_{X_0}^{1-\theta} \norm{f_1}_{X_1}^\theta  \\
		      &\leq 4 \norm{f}_{X_0^{1-\theta}X_1^\theta},
	    \end{align*}
        where we have used \eqref{EquationRoughBoundAnalyticFunction} and the lattice property in the second step.
	    Since $\abs{f} = \liminf_n \abs{g_n}$ holds with $\liminf_n\norm{\,\abs{g_n}\,}_{[X_0,X_1]_\theta}<\infty$, the weak Fatou property of $[X_0,X_1]_\theta$ now implies that $\abs{f} \in [X_0,X_1]_\theta$ with
	    \begin{align*}
		    \norm{\,\abs{f}\,}_{[X_0,X_1]_\theta} \leq C \liminf_{n\to\infty}\norm{\,\abs{g_n}\,}_{[X_0,X_1]_\theta} \leq 4C \norm{f}_{X_0^{1-\theta}X_1^\theta},
	    \end{align*}
	    where $C\geq1$ is the constant from the weak Fatou property of $[X_0,X_1]_\theta$. By the preliminary remark, we see that $f \in [X_0,X_1]_\theta$ with $\norm{f}_{[X_0,X_1]_\theta} \leq 4C \norm{f}_{X_0^{1-\theta}X_1^\theta}$.
     
        The remainder of this proof now works as indicated in Section \ref{ChapterglimpseAtOtherDirection} below.
    \end{proof}
    
	\section{A glimpse at the other inclusion of the Calderón formula}\label{ChapterglimpseAtOtherDirection}
    \noindent In order to fully justify why separability is only used for one direction of Theorem~\ref{TheoremTheBigResult}, we give a short overview on the ingredients of the proof of the inclusion $[X_0,X_1]_\theta \subseteq X_0^{1-\theta}X_1^\theta$ in \cite{InterpolationScalesKaltonMitrea}.
    
	A motivation for the Banach case can be found in \parencite[Chap.~4]{KreinPetuninSemenov}, where it is argued that admissible functions $F$ are determined by their boundary values through the Poisson formula
	\begin{align}\label{EquationPoissonKernelBanach}
		F(z)&=\int_\R F(\ii t) P_0(z,\ii t) \dd t + \int_\R F(1+\ii t) P_1(z,1+\ii t) \dd t\\
		&\eqqcolon (1-\text{Re}(z))x_0 + \text{Re}(z)x_1 \notag,
	\end{align} 
	where $P_0$ and $P_1$ denote the components of the Poisson kernel for $\overline{\strip}$, while $x_0$ and $x_1$ define elements in $X_0$ and $X_1$ respectively.
    The above equality can be turned into a product estimate of the form $\abs{F(\theta)} \leq x_0^{1-\theta}x_1^\theta$ a.e., showing $[X_0,X_1]_\theta \subseteq X_0^{1-\theta}X_1^\theta$ with continuous inclusion due to the validity of the triangle inequality for $X_j$-valued integrals.
	
	As a preliminary result in the quasi-Banach case, Kalton showed in \parencite[Thm.~4.4]{AConvexIffLConvex} and \parencite[Thm.~2.2]{LConvexIffpConvex} that for a quasi-Banach function space $X$, the property of $A$-convexity is equivalent to so-called \textit{$p$-convexity}, which means that there is $p\in(0,1]$ and $C\geq1$ depending only on $p$ such that for all $g_0,\ldots,g_n \in X$ we have
	\begin{align*}
		\norm{\left(\sum_{k=0}^n \abs{g_k}^p\right)^{1/p}}_X \leq C \left(\sum_{k=0}^n \norm{g_k}_X^p\right)^{1/p}.
	\end{align*}
	In \parencite[Thm.~2.2]{LConvexIffpConvex}, it is also shown that $X$ is $p$-convex if and only if $X$ is $q$-convex for all $q \in (0,p]$.
 
    Straightforward estimates then show that $X_0+X_1$ is $p$-convex \parencite[Thm.~3.4]{InterpolationScalesKaltonMitrea}.
    In particular, we may assume $X_0$, $X_1$ and $X_0+X_1$ to be $p$-convex for the same range of parameters $q \in (0,p]$.
	
	First, in \parencite[Thm.~3.4]{InterpolationScalesKaltonMitrea} it is used that $p$-convex quasi-Banach function spaces $X$ admit the \textit{$p$-convexification} $X^p \coloneqq \{f \in \Lp{0}(\Omega)\,|\,\abs{f}^{1/p} \in X\}$, which is endowed with $\norm{f}_{X^p} \coloneqq \Vert\abs{f}^{1/p}\Vert_X^p$ and after renorming satisfies a triangle inequality \parencite[Prop. 3.23]{MasterThesis}, is a Banach function space.
    The idea of the proof is then to mimic equation \eqref{EquationPoissonKernelBanach} for $\abs{F}^q$, which -- and this is a remarkable observation -- can be turned back into an estimate for $\abs{F}$ via a limiting process as $q \to 0$.
	
	Indeed, if $f=F(\theta)\in[X_0,X_1]_\theta$ for some $F \in\mathcal{F}$, then $\abs{F}^q$ is Banach-valued for all $q \in (0,p]$ but not analytic in general.
    Using scalar-valued theory, it is shown in \parencite[Thm.~3.4]{InterpolationScalesKaltonMitrea} that $\abs{F}^q$ satisfies the sub-mean value formula
    \begin{align*}
        \abs{F(z)(\spacedot)}^q \leq \dashint_0^{2\pi} \abs{F(z+r\e^{\ii t})(\spacedot)}^q \dd t = \left[\dashint_0^{2\pi} \abs{F(z+r\e^{\ii t})}^q \dd t\right](\spacedot),
    \end{align*}
    whenever $B_r(z)$ is compactly contained in $\strip$.
    We stress that the above is an a.e.\ estimate on $\Omega$ and that the right-hand side is a vector-valued Riemann integral. Interchanging the inner variable and the integral was a non-trivial consequence of analyticity of $F$, compare with \parencite[Lem 7.5]{MasterThesis}.
    
    Applying a positive functional $\varphi$ (of which there are plenty, because the dual space of $(X_0+X_1)^q$ contains a saturated Banach function space by virtue of the weak order unit, see \parencite[Ch.~71, Thm.~4(a)]{Zaanen} and \parencite[Prop.~2.5]{LoristNieraeth}), yields a scalar-valued subharmonic function $\varphi(\abs{F(\spacedot)}^q)$. Using the Poisson formula for subharmonic functions on $\overline{\strip}$, it is shown in \parencite[Thm.~3.4]{InterpolationScalesKaltonMitrea} that
    \begin{align*}
		\varphi(\abs{F(\theta)}^q) &\leq \int_\R P_0(\theta,\ii t)\, \varphi(\abs{F(\ii t)}^q) \dd t +\int_\R P_1(\theta,1+\ii t)\, \varphi(\abs{F(1+ \ii t)}^q) \dd t.
	\end{align*}
	Pulling $\varphi$ out of the integrals by continuity and using positivity, yields
	\begin{align*}
		\abs{F(\theta)}^q &\leq \int_\R P_0(\theta,\ii t) \abs{F(\ii t)}^q \dd t + \int_\R P_1(\theta,1+\ii t) \abs{F(1+ \ii t)}^q  \dd t \\
		&\eqqcolon (1-\theta) f_0(q)^q + \theta f_1(q)^q,
	\end{align*}
     where the functions $f_j(q)^q$ are defined as $X_j^q$-valued Riemann integrals. Interchanging inner variables and integrals is again a tedious task (see the proof of \parencite[Thm.~7.1, Ch.~7.3]{MasterThesis} for details), but it works and Jensen's inequality yields $\abs{F(\theta)}^q \leq (1-\theta)f_0(p)^q + \theta f_1(p)^q$ almost everywhere.
    By letting $q\to0$, it is argued in \parencite[Thm.~3.4]{InterpolationScalesKaltonMitrea} that $\abs{F(\theta)} \leq f_0(p)^{1-\theta}f_1(p)^\theta$ almost everywhere. Finally, the triangle inequality for $X_j^p$-valued integrals (after renorming) yields
	\begin{align*}
		\norm{f_j(p)}_{X_j} &= \left(\norm{\frac{1}{1-\theta}\int_\R P_j(\theta,j+\ii t) \abs{F(j+ \ii t)}^p  \dd t }_{(X_j)^p}\right)^{1/p} \\
    &\leq \left(\frac{1}{1-\theta}\int_\R P_j(\theta,j+\ii t) \norm{\abs{F(j+ \ii t)}^p}_{(X_j)^p} \dd t \right)^{1/p} \\
    &\leq \norm{F}_\mathcal{F}.
	\end{align*}
	
	\begingroup
	\setlength{\emergencystretch}{.5em}
	\printbibliography[
	heading=bibintoc,
	title={Bibliography}
	]
	\endgroup
	
\end{document}